\newtheorem{theo}{Theorem}
\newtheorem{coro}[theo]{Corollary}
\newtheorem*{re}{Remark}
\newtheorem{prop}[theo]{Proposition}
\newcommand{\R}{{\bf R}}
\newcommand{\p}{\partial}
\newcommand{\lsm}{\lesssim}
\newcommand{\ep}{\varepsilon}
\newcommand{\al}{\alpha}
\newcommand{\bt}{\beta}
\newcommand{\lm}{\lambda}
\newcommand{\dsp}{\displaystyle}
\newcommand{\nb}{\nabla}
\newcommand{\fa}{\frac}
\newcommand{\sr}{\sqrt}
\newcommand{\lp}[1]{\left(#1 \right)}
\newcommand{\lb}[1]{\left\{#1 \right\}}
\newcommand{\lbt}[1]{\left[#1 \right]}
\begin{document}
 
\begin{center}
{\large\bf 
Blow up of solutions of semilinear wave equations related to nonlinear waves 
in  de Sitter spacetime
}
\end{center}

\vspace{3mm}
\begin{center}

Kimitoshi Tsutaya$^\dagger$ 
        and Yuta Wakasugi$^\ddagger$ \\
\vspace{1cm}

 $^\dagger$Graduate School  of Science and Technology \\
Hirosaki University  \\
Hirosaki 036-8561, Japan\\
\footnotetext{AMS Subject Classifications:35Q85; 35L05; 35L70.}  
\footnotetext{* The research was supported by JSPS KAKENHI Grant Number JP18K03351. }

\vspace{5mm}
        $^\ddagger$
Graduate School of Advanced Science and Engineering \\
Hiroshima University \\
Higashi-Hiroshima, 739-8527, Japan

\end{center}

\begin{abstract}
Consider a nonlinear wave equation  for a massless scalar field  with self-interaction 
in the spatially flat de Sitter spacetime. 
We show that blow-up in a finite time occurs for the equation with arbitrary power nonlinearity as well as upper bounds of the lifespan of blow-up solutions. 
The blow-up condition is the same as in the accelerated expanding Friedmann-Lema\^itre-Robertson-Walker (FLRW) spacetime. 
We also show the same results for the space derivative nonlinear term.  



\end{abstract}

\addtolength{\baselineskip}{2mm}

\section{Introduction}
This paper is subsequent to our recent work \cite{TW1,TW2,TW3,TW4} concerned with the semilinear wave equations in the Friedmann-Lema\^itre-Robertson-Walker (FLRW) spacetimes. 

We consider the following Cauchy problem: 
\begin{equation}
\begin{cases}
\dsp
u_{tt}-a(t)\Delta u+\mu u_t=|u|^p \; \mbox{ or } \; |\nb_x u|^p, &\qquad  t>0, \; x\in \R^n,   \\
  & \\
u(0,x)=\ep u_0(x), \; u_t(0,x)=\ep u_1(x), &\qquad x\in \R^n,  
\end{cases}
\label{eq}
\end{equation}
where $a\in C^1([0,\infty))$ satisfies
\begin{equation}
a(t)> 0, \quad a'(t)\le 0 \quad \mbox{ for all }t\ge 0, \mbox{ and }
\int_0^\infty a(t)^{1/2}dt<\infty, 
\label{asm-a}
\end{equation}
also  $\Delta=\p_1^2+\cdots \p_n^2, \; \nb_xu = (\p_1 u, \cdots,\p_n u), \; \p_j=\p/\p x^j, \; j=1,\cdots,n, \\
 \; (x^1,\cdots, x^n)\in \R^n$,  
$\mu$ is a nonnegative constant, $p>1$, and $\ep>0$ is a small parameter.  

For the special case $a(t) \equiv 1$, there have been many results. 
Especially important among them are Todorova and Yordanov \cite{ToYo01} and Zhang \cite{Zh01} which have proved that the critical exponent is given by
the so-called Fujita exponent 
$p_F(n) = 1+ 2/n$.
The critical exponent means the threshhold condition on $p$ between 
global existence and blow-up of solutions for small initial data.
We refer to the introduction of \cite{IkInWa17} for more details.

For general $a(t)$, D'Abbicco and Lucente \cite{DaLu13} proved that 
if $a(t)$ satisfies the condition
$a(t) \le C (t/\mu)^{-\alpha}$
with some constants
$\alpha < \min\{1,\frac{2}{n}\}$ and $C>0$,
then no global weak solutions exist, provided that
\[
    1 + \frac{\max\{0,\alpha\}}{1-\alpha}
    < p \le
    1 + \frac{2}{n(1-\alpha)}.
\]
In case $a(t) = e^{-2t}$ and $n=\mu =3$,
Galstian \cite{Ga17} showed 
a lower bound of the lifespan of solutions for $2\le p < 5$.
For this case, we can apply the result above by \cite{DaLu13} and 
see that no global weak solutions exist for $1<p <3$ 
by taking arbitrary $\alpha < 2/3$. 
We remark that the result of \cite{DaLu13} cannot be applied to
$p \ge 3$.
To our best knowledge, there are no results of upper bounds of the lifespan of solutions.

For the time-dependent damping case $\mu=\mu(t)$ or the Klein-Gordon equation, 
we refer to, e.g., \cite{BuRe15,EbRe18,Ya09,Ya12}.

The equation in \eqref{eq} generalizes the nonlinear wave equations in the de Sitter spacetime. 
The metric of the de Sitter spacetime with zero space curvature is given by 
\[
g: \; ds^2=-dt^2+e^{2Ht}d\sigma^2, 
\]
where the speed of light is equal to $1$, $d\sigma^2$ is the line element of $n$-dimensional Euclidean space and $H$ is the Hubble constant. 
The scale factor $e^{Ht}$ describes expansion of the spatial metric, 
and also is determined by solving the Einstein equation with the cosmological constant $\Lambda$ and the energy-momentum tensor for the perfect fluid. Let $\rho$ and $p$ be the energy density and pressure, respectively, 
and let us add an equation of state as $p=w\rho$ with a constant $w$. 
If $w=-1$, then we obtain  
the constant density $\rho$ and the scale factor
\[
R(t)=ce^{Ht}, 
\]
with some constant $c$ and $H=\sr{(16\pi G\rho+2\Lambda)/(n(n-1))}$, where $G$ is the gravitational constant. 
See \cite[(3.1)]{CGLY} for the derivation of this scale factor and \cite{GY} for another derivation.

For the spatially flat de Sitter metric,  
the semilinear wave equations $\Box_g u
=\\
|g|^{-1/2}\p_\al(|g|^{1/2}g^{\al\bt}\p_\bt)u= -|u|^p$ or $-|\nb_x u|^p$ with $p>1$ become
\begin{equation}
u_{tt}-e^{-2Ht}\Delta u+nH u_t=|u|^p, \mbox{ or } \; |\nb_x u|^p. 
\label{ore}
\end{equation}
Our aim of this paper is to show that blow-up in a finite time occurs for the generalized equation \eqref{eq} as well as upper bounds of the lifespan of the blow-up solutions. 

Our previous work \cite{TW1,TW2,TW3,TW4} has treated the case $-1<w\le 1$, 
where the background metric is given by 
\begin{equation}
g: \; ds^2=-dt^2+t^{4/(n(1+w))}d\sigma^2, 
\label{FLRW}
\end{equation}
which is the FLRW metric with zero spatial curvature.

We have shown blow-up of solutions in finite time and upper bounds of their lifespan 
for $\Box_g u= -|u|^p$ for case of decelerated expansion $2/n-1<w\le 1, \; n\ge 2$ in \cite{TW1,TW2} 
and for case of accelerated expansion $-1<w \le 2/n-1, \; n\ge 2$ in \cite{TW3}. 
Furthermore, we have studied equation $\Box_g u= -|u_t|^p$ or $-|\nb_x u|^p$ in \cite{TW4}. 
It should be noted here that if $-1<w\le 2/n-1$, then blow-up in finite time can happen to occur for all $p>1$. 

In this paper we show very similar results to the ones above in case $-1<w\le 2/n-1$
for \eqref{eq} generalizing \eqref{ore}. 
In Section 2 we state our main result. 
The first step of our proof is to show the property of finite speed of propagation for classical solutions.  
We then prove the theorem by combining this property with a test function method similar to the one in \cite{IW}. 
We emphasize that unlike \cite{IW}, our test function is of only time variable.  
Finally, we state the conclusion of our work in Section 3.

\section{Main result and proof}
\setcounter{equation}{0}

We now state our main result. 
Let $T_\ep$ be the lifespan of solutions of \eqref{eq}, that is, $T_\ep$ is the supremum of $T$ such that  \eqref{eq} have a solution for $x\in \R^n$ and $0\le t<T$. 

\begin{theo}
 Let $a(t)$ satisfy \eqref{asm-a} and 
 let $n\ge 1, \; \mu\ge 0$ and $p>1$.
Assume that $u_0\in C^2(\R^n)$ and $u_1\in C^1(\R^n)$,  $\mbox{\rm supp }u_0, \mbox{\rm supp }u_1\subset \{|x|\le R\}$ with $R>0$ and 
\[
\int (\mu u_0(x)+u_1(x))dx >0. 
\]
Suppose that the problem \eqref{eq} has  a classical solution $u\in C^2([0,T)\times\R^n)$.  
Then $T<\infty$ and for arbitrary $\ep>0$, the lifespan $T_\ep$ of the solution $u$ to \eqref{eq} is estimated as 
\begin{align*}
 & T_\ep \le C\ep^{-(p-1)} && \mbox{if }\mu>0, \\
 & T_\ep \le C\ep^{-(p-1)/(p+1)} && \mbox{if }\mu=0, 
\end{align*}
where $C>0$ is a constant independent of $\ep$. 
\end{theo}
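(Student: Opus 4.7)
The plan is to reduce \eqref{eq} to an ODE inequality for the spatial average $F(t) := \int_{\R^n} u(t,x)\,dx$ and then apply a Kato-type blow-up argument. The assumption $\int_0^\infty a(s)^{1/2}\,ds < \infty$ in \eqref{asm-a} enters essentially in ensuring that the support of $u(t,\cdot)$ remains uniformly bounded in $t$.

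First I would prove finite speed of propagation for classical solutions. The standard energy identity applied to $u_{tt}-a(t)\Delta u + \mu u_t$ on the truncated backward cone $\{(s,y) : 0\le s\le t,\ |y-x_0| \le \int_s^t a(\tau)^{1/2}\,d\tau\}$ shows that the local propagation speed is $a(t)^{1/2}$, with the damping term appearing with a favorable sign. Since the data vanish for $|x|>R$, this gives $\mbox{\rm supp}\,u(t,\cdot)\subseteq\{|x|\le R_\infty\}$ with $R_\infty:=R+\int_0^\infty a(s)^{1/2}\,ds<\infty$.

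Next I would integrate the PDE in $x$. The divergence theorem kills the $\Delta u$-term, yielding $F''(t)+\mu F'(t)=\int_{\R^n} N(u)\,dx$ with $N(u)\in\{|u|^p,\,|\nb_x u|^p\}$. Because $u(t,\cdot)$ is supported in a ball of fixed volume $V_\infty$, Hölder's inequality (for $N=|u|^p$), respectively Hölder combined with the Poincaré inequality (for $N=|\nb u|^p$, using that $u$ vanishes outside $B(0,R_\infty)$), produces $\int N(u)\,dx\ge c_0|F(t)|^p$ with $c_0>0$ independent of $\ep$. Setting $G:=F'+\mu F$, one has $G'\ge c_0|F|^p\ge 0$, so $G(t)\ge G(0)=\ep\int(\mu u_0+u_1)\,dx>0$. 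Solving $(e^{\mu t}F)'=e^{\mu t}G$ then yields $F(t)\ge c_1\ep$ for $t\ge t_*$ (with $t_*=O(1)$ independent of $\ep$) in the case $\mu>0$, and $F(t)\ge c_1\ep t$ for $t\ge t_*$ in the case $\mu=0$.

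Finally I would analyze $F''+\mu F'\ge c_0 F^p$ together with these lower bounds. For $\mu=0$, multiplying by $F'>0$ and integrating gives $F'\gtrsim F^{(p+1)/2}$ once $F$ is large; the separation-of-variables blow-up time from a base $t_0$ is $\sim(\ep t_0)^{-(p-1)/2}$, and balancing this against $t_0$ itself produces $T_\ep\le C\ep^{-(p-1)/(p+1)}$. For $\mu>0$ I would use a doubling-time iteration: if $F\ge\alpha$ on an interval, integrating $(e^{\mu t}F')'\ge c_0\alpha^p e^{\mu t}$ shows $F$ doubles in time $\sim\mu/(c_0\alpha^{p-1})$, and the resulting geometric series sums to $O(\mu(c_1\ep)^{-(p-1)}/c_0)=O(\ep^{-(p-1)})$. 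The main obstacle I anticipate is this last step for $\mu>0$: it requires the iterative (or equivalent Riccati) argument to squeeze out the sharp $\ep^{-(p-1)}$ rate rather than the weaker bound obtainable from a single energy multiplication; by contrast, the gradient-nonlinearity case introduces nothing essentially new once the uniform-in-$t$ Poincaré inequality is available from Step 1.
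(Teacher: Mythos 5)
Your first step (finite speed of propagation with local speed $a(t)^{1/2}$, hence $\mbox{supp}\,u(t,\cdot)\subset\{|x|\le R+\int_0^\infty a(s)^{1/2}ds\}$) is exactly the content of the paper's Proposition 2 and Corollary 3, and your use of Poincar\'e's inequality on the fixed ball to reduce the gradient nonlinearity to $|u|^p$ matches the paper as well. From there, however, the paper takes a genuinely different route: it never forms the ODE $F''+\mu F'\ge c_0|F|^p$ for $F(t)=\int u\,dx$, but instead multiplies the equation by a time-only cutoff $\psi_\tau(t)=\eta(t/\tau)^{2p'}$, integrates over space-time, and uses H\"older and Young to obtain $\ep\le C(\tau^{-2+1/p'}+\mu\tau^{-1+1/p'})^{p'}$ for every $\tau<T_\ep$ and every $\ep>0$; both lifespan bounds then drop out at once, with no iteration and no case analysis in the mechanism.

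Your route is viable, and the $\mu=0$ half (the Glassey-type argument $F'\gtrsim F^{(p+1)/2}$ after $F\ge c_1\ep t$, balanced against the base time $t_0$) is essentially correct. The genuine gap is the $\mu>0$ doubling iteration, which you flag as the main obstacle and then assert closes; as stated it does not. If $F\ge\alpha$ on $[T_1,\infty)$, integrating $(e^{\mu t}F)'=e^{\mu t}(F'+\mu F)$ against the linear growth of $G:=F'+\mu F$ yields $F(t)\gtrsim c_0\alpha^p(t-T_1)/\mu$ only after a relaxation period of length $\sim 1/\mu$ in which the exponential weight suppresses the gain, so the doubling time is bounded by $\max\{2/\mu,\ C\mu\alpha^{-(p-1)}/c_0\}$ rather than by $C\mu\alpha^{-(p-1)}/c_0$. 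Once $\alpha_k=2^kc_1\ep$ exceeds $(\mu^2/c_0)^{1/(p-1)}$ the per-step bound degenerates to the constant $2/\mu$, and $\sum_k 2/\mu=\infty$: the geometric series does not close, and the argument does not even establish finite-time blow-up, let alone the rate $\ep^{-(p-1)}$. It can be repaired with a separate final-stage lemma: once $F\ge F_0:=(C\mu^2/c_0)^{1/(p-1)}$ (reached in time $C\ep^{-(p-1)}+O(\mu^{-1}\log(1/\ep))$, which is harmless), one shows $G\ge 2\mu F$ becomes and stays true, whence $F'\ge G/2$, $F'\le G$, $G\,dG/dF\ge c_0F^p$, so $F'\gtrsim F^{(p+1)/2}$ and blow-up occurs in $O(1)$ further time; but this step must be supplied. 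A secondary discrepancy: since your lower bound $F\ge c_1\ep$ only holds after a fixed time $t_*=O(1)$, your conclusion is $T_\ep\le t_*+C\ep^{-(p-1)}$, which gives the stated bound only for $\ep$ in a bounded range, whereas the paper's inequality holds for arbitrary $\ep>0$ as the theorem claims.
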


\begin{re}
{\rm The upper bound of the lifespan for $\mu=0$ is better than that for $\mu>0$ in the theorem.} 
\end{re}

In order to prove Theorem 1,  we first show the following proposition. 
We denote $u'=(u_t,\nb_x u)$, and 
$u''$ represents the vector of the second derivatives of $u$ with respect to time and space variables.

\begin{prop}
Let $F(t,x,u,u',u'')$ be a function of class $C^1$ in $t,x,u,u'$ and $u''$ satisfying 
\begin{equation}
F(t,x,0,0,u'')=0 \quad \mbox{for all }t,x \mbox{ and }u''
\label{asm1}
\end{equation}
and let $u(t,x)$ be a $C^2$-solution of the equation
\begin{equation}
u_{tt}-a(t)\Delta u=F(t,x,u,u',u'')
\label{eqF1}
\end{equation}
with $a\in C^1([0,\infty))$ such that $a(t)>0$ and $a'(t)\le 0$ for all $t\ge 0$ 
in the region 
\[
\Lambda_{T,x_0}=\{(t,x)\in [0,T)\times \R^n: \; |x-x_0|<A(T)-A(t)\}
\]
for some $T>0$ and $x_0\in \R^n$, where 
\begin{equation}
A(t)=\int_0^t \sr{a(s)}ds.  
\label{defA}
\end{equation}
Assume that 
\begin{equation}
u(0,x)=u_t(0,x)=0 \qquad \mbox{for }|x-x_0|<A(T). 
\label{asm2}
\end{equation}
Then $u$ vanishes in $\Lambda_{T,x_0}$. 
\end{prop}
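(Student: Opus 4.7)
\bigskip

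\noindent\textbf{Proof plan.} The natural strategy is a classical local energy estimate in the backward cone $\Lambda_{T,x_0}$, adapted to the time-dependent speed of propagation $\sqrt{a(t)}$. For $0\le t<T$ let $B(t)=\{x\in\R^n:|x-x_0|<A(T)-A(t)\}$ and define
\[
E(t)=\frac12\int_{B(t)}\bigl(u_t^2+a(t)|\nb_x u|^2+u^2\bigr)dx.
\]
The $u^2$ piece is included so that a lower order contribution coming from the nonlinearity $F$ can be absorbed later via Gronwall's inequality. By the assumption \eqref{asm2} we have $E(0)=0$, and the goal is to show $E\equiv 0$ on $[0,T)$.

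\smallskip

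The first step is to differentiate $E(t)$. Since $\partial B(t)$ moves inward with normal speed $A'(t)=\sqrt{a(t)}$, the Reynolds transport formula produces a volume term plus a nonpositive boundary contribution $-\frac12\sqrt{a(t)}\int_{\p B(t)}(u_t^2+a(t)|\nb_x u|^2+u^2)\,dS$. In the volume term I substitute $u_{tt}=a(t)\Delta u+F$ from \eqref{eqF1} and integrate by parts in the term $\int a(t)\nb_x u\cdot \nb_x u_t\,dx$. The interior contribution collapses to
\[
E'(t)\le \int_{B(t)} u_t F\,dx+\int_{B(t)} u\,u_t\,dx+\frac{a'(t)}{2}\int_{B(t)}|\nb_x u|^2 dx +\mathcal{B}(t),
\]
where $\mathcal{B}(t)$ collects the lateral boundary terms. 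By assumption $a'(t)\le 0$, so the third term is discarded. The key cancellation is that the boundary term from integration by parts satisfies $a(t)u_t\p_\nu u\le \sqrt{a(t)}\cdot\tfrac12(u_t^2+a(t)|\nb_x u|^2)$ pointwise (Cauchy--Schwarz with the weight $\sqrt{a(t)}$), so it is precisely absorbed by the inward flux produced by the moving domain, leaving $\mathcal{B}(t)\le 0$.

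\smallskip

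Next I estimate $F$. Because $u\in C^2(\overline{\Lambda_{T,x_0}\cap([0,t]\times\R^n)})$ is continuous on a compact set, the arguments $(t,x,u,u',u'')$ range over a compact subset $K$ of the domain of $F$. The hypothesis \eqref{asm1} together with $F\in C^1$ gives, via the mean value theorem applied to $s\mapsto F(t,x,su,su',u'')$,
\[
|F(t,x,u,u',u'')|=|F(t,x,u,u',u'')-F(t,x,0,0,u'')|\le C_K(|u|+|u_t|+|\nb_x u|).
\]
Combined with $|u_t F|+|u u_t|\le C(u^2+u_t^2+|\nb_x u|^2)$, this yields $E'(t)\le C_* E(t)$ on every compact subinterval of $[0,T)$, with $C_*$ depending only on the already fixed solution. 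Gronwall's inequality and $E(0)=0$ force $E(t)\equiv 0$, hence $u\equiv 0$ in $\Lambda_{T,x_0}$.

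\smallskip

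\emph{Expected main obstacle.} The delicate point is the algebraic cancellation between the lateral boundary term produced by integration by parts and the ``inflow'' term generated by the shrinking domain; this is exactly what fixes the admissible cone speed $A'(t)=\sqrt{a(t)}$ in \eqref{defA}. Everything else (pulling $|F|\le C(|u|+|u'|)$ out of \eqref{asm1}, discarding the $a'(t)$ term, and running Gronwall) is routine once that cancellation is verified.
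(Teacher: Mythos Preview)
Your argument is correct and is the standard ``horizontal-slice'' energy method for finite propagation speed: differentiate the energy over the shrinking ball $B(t)$, use $a'(t)\le 0$ to discard the coefficient term, and verify that the Cauchy--Schwarz bound $a(t)u_t\partial_\nu u\le \tfrac{\sqrt{a(t)}}{2}(u_t^2+a(t)|\nabla_x u|^2)$ exactly absorbs the inflow term produced by the boundary moving at speed $A'(t)=\sqrt{a(t)}$. The Gronwall step is fine because on any $[0,T']\subset[0,T)$ one has $a(t)\ge a(T')>0$, so $|\nabla_x u|^2\le a(T')^{-1}\,a(t)|\nabla_x u|^2$ and the right-hand side is controlled by $E(t)$.

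The paper takes a genuinely different route, following John and Sogge: instead of horizontal slices it foliates $\Lambda_{T,0}$ by a one-parameter family of \emph{strictly space-like} caps $S_\lambda=\{t=\psi(\lambda,x)\}$ (ellipsoidal in the $A$-variable) that interpolate between the base $\{t=0\}$ and the lateral cone as $\lambda\uparrow A(T)$. The divergence theorem on the region $R_\lambda$ below $S_\lambda$ then yields a flux term with a strict margin $1-\theta(\lambda_0)>0$ (since $\sqrt{a(t)}|\nabla_x\psi|\le\theta(\lambda_0)<1$), and Gronwall is run in the foliation parameter $\lambda$ rather than in $t$. The advantage of the paper's approach is that all surface integrals live strictly inside the open region $\Lambda_{T,0}$, so one never needs traces of $u,u'$ on the characteristic lateral boundary; your approach implicitly uses such traces when applying Reynolds transport and integration by parts on $\partial B(t)$, which lies on the characteristic cone. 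This is not a real gap---it is routinely repaired by running your argument on the slightly smaller balls $\{|x-x_0|<A(T)-A(t)-\delta\}$ and letting $\delta\downarrow 0$---but it is worth flagging. Conversely, your method is shorter and avoids the explicit construction and calculus of $\psi(\lambda,x)$.
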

 
\begin{proof} 
We prove the proposition following \cite{Jo1,So}. 
We have only to consider the case $x_0=0$ since the equation is invariant under translation with respect to $x$. 
Let 
\[
\psi(\lm,x)=A^{-1}\lbt{A(T)-\lb{(A(T)-\lm)^2+A(T)^{-2}(2\lm A(T)-\lm^2)|x|^2}^{1/2}}, 
\]
where $A^{-1}$ is the inverse function of $A(t)$, and 
$\lm$ is a parameter between $0$ and $A(T)$. 
We then have 
\[
\psi(0,x)=0, \quad \lim_{\lm\to A(T)}\psi(\lm,x)=A^{-1}(A(T)-|x|). 
\] 
Define the region $R_\lm$ by 
\[
R_\lm=\{(t,x): \; 0\le t\le \psi(\lm,x), \; |x|<A(T)-A(t)\}.   
\]
We have 
\[
\Lambda_{T,0}=\bigcup_{0\le \lm<A(T)}R_\lm
\]
and also 
\begin{align}
&|\nb_x\psi(\lm,x)| \nonumber \\
&=
\left|(A^{-1})'\lp{A(T)-\lb{(A(T)-\lm)^2+A(T)^{-2}(2\lm A(T)-\lm^2)|x|^2}^{1/2}}\right| \nonumber\\
& \qquad \cdot \fa{(2\lm A(T)-\lm^2)|x|}{A(T)^2\lb{(A(T)-\lm)^2+A(T)^{-2}(2\lm A(T)-\lm^2)|x|^2}^{1/2}}
\label{psi1}
\end{align}
for $0\le \lm<A(T)$. 
We here note that 
\begin{align}
&(A^{-1})'\lp{A(T)-\lb{(A(T)-\lm)^2+A(T)^{-2}(2\lm A(T)-\lm^2)|x|^2}^{1/2}}
\nonumber\\
&=\lb{\sr a \circ A^{-1}\lp{A(T)-\lb{(A(T)-\lm)^2+A(T)^{-2}(2\lm A(T)-\lm^2)|x|^2}^{1/2}}}^{-1} \nonumber\\
&=\fa 1{\sr{a(\psi(\lm,x))}}. 
\label{psi2} 
\end{align}
Define the surface $S_\lm$ by  
\[
S_\lm=\{(t,x): t=\psi(\lm,x), \; |x|<A(T)\}. 
\]
The outward unit normal at $(\psi(\lm,x),x)\in S_\lm$ is 
\[
\fa 1{\sr{1+|\nb_x\psi(\lm,x)|^2}}(1,-\nb_x\psi(\lm,x)). 
\]
Then 
\begin{align*}
\int_{R_\lm}2u_tFdtdx
&=\int_{R_\lm}2u_t(u_{tt}-a(t)\Delta u)dtdx \\ 
&=\int_{R_\lm}\bigl\{\p_t(u_t^2+a(t)|\nb_xu|^2)-2\nb_x\cdot(a(t) u_t\nb_x u)\\
& \qquad\qquad  -a'(t)|\nb_x u|^2\bigr\}dtdx \\
&\ge  \int_{R_\lm}\lb{\p_t(u_t^2+a(t)|\nb_xu|^2)-2\nb_x\cdot(a(t) u_t\nb_x u)}dtdx  
\end{align*}
by \eqref{asm-a}. 
Let $\lm_0$ satisfy $0<\lm_0<A(T)$. 
Note by \eqref{psi1} and \eqref{psi2} that 
\begin{align*}
\sr{a(t)}|\nb_x\psi|
&=\fa{(2\lm A(T)-\lm^2)|x|}{A(T)^2\lb{(A(T)-\lm)^2+A(T)^{-2}(2\lm A(T)-\lm^2)|x|^2}^{1/2}} \\
&<A(T)^{-1}\sr{2\lm_0A(T)-\lm_0^2}\equiv \theta(\lm_0)<1
\end{align*} 
on $S_\lm$ for $0\le \lm\le \lm_0<A(T)$. 
Using the divergence theorem, we obtain 
\begin{align}
\int_{R_\lm}2u_tFdtdx
&\ge  \int_{S_\lm}\lb{u_t^2+a(t)|\nb_xu|^2+2\nb_x\psi\cdot(a(t) u_t\nb_x u)} \nonumber \\
& \hspace{6cm}
  \cdot \fa 1{\sr{1+|\nb_x\psi|^2}}d\sigma \nonumber \\
&\ge \int_{S_\lm}\lb{u_t^2+a(t)|\nb_xu|^2-\sr{a(t)}|\nb_x\psi|(u_t^2+a(t)|\nb_x u|^2)} \nonumber \\
& \hspace{6cm} 
 \cdot \fa 1{\sr{1+|\nb_x\psi|^2}}d\sigma \nonumber \\
&\ge (1-\theta(\lm_0))\int_{S_\lm} \fa{u_t^2+a(t)|\nb_xu|^2}{\sr{1+|\nb_x\psi|^2}}d\sigma
\label{Ap1}
\end{align}
for $0\le \lm\le \lm_0<A(T)$.

On the other hand, by assumption \eqref{asm1}, we have 
\[
|u_t F(u,u',u'')|\le C(u^2+|u'|^2)
\]
in $\Lambda_{T,0}$. 
We note that 
\[
\int_0^{\psi(\lm,x)}u(t,x)^2dt\le \fa 12\psi(\lm,x)^2\int_0^{\psi(\lm,x)}u_t(t,x)^2dt
\lsm T^2\int_0^{\psi(\lm,x)}u_t(t,x)^2dt
\]
and also by \eqref{psi2} that 
\begin{align*}
\psi_\lm(\lm,x)
&=(A^{-1})'\lp{A(T)-\lb{(A(T)-\lm)^2+A(T)^{-2}(2\lm A(T)-\lm^2)|x|^2}^{1/2}}  \nonumber\\
&\qquad \cdot
\fa{(A(T)-\lm)(1-A(T)^{-2}|x|^2)}
{\lb{(A(T)-\lm)^2+A(T)^{-2}(2\lm A(T)-\lm^2)|x|^2}^{1/2}} \\
&= \fa 1{\sr{a(\psi(\lm,x))}}\cdot\fa{(A(T)-\lm)(1-A(T)^{-2}|x|^2)}
{\lb{(A(T)-\lm)^2+A(T)^{-2}(2\lm A(T)-\lm^2)|x|^2}^{1/2}},  \\
|\psi_\lm(\lm,x)|
&\le \fa 1{\sr{a(T)}}. 
\end{align*}
We then have 
\begin{align}
\int_{R_\lm}2u_tFdtdx
&\le C(1+T^2)\int_{R_\lm} |u'|^2dtdx \nonumber\\
&\le C_T \int_{R_\lm} (u_t^2+a(t)|\nb_xu|^2)dtdx \nonumber\\
&=C_T\int_0^\lm \int_{S_\mu}(u_t^2+a(t)|\nb_xu|^2)\fa{\psi_\mu}{\sr{1+|\nb_x\psi|^2}}d\sigma d\mu \nonumber\\
&\le C_T\int_0^\lm \int_{S_\mu}\fa{u_t^2+a(t)|\nb_xu|^2}{\sr{1+|\nb_x\psi|^2}}d\sigma d\mu
\label{Ap2}
\end{align} 
for $\lm \le \lm_0$. Set 
\[
I(\lm)=\int_{S_\lm} \fa{u_t^2+a(t)|\nb_xu|^2}{\sr{1+|\nb_x\psi|^2}}d\sigma. 
\]
Using Gronwall's inequality for \eqref{Ap1} and \eqref{Ap2}, we see that 
$I(\lm)=0$ for $0\le \lm\le \lm_0<A(T)$.  Therefore, since $\lm$ and $\lm_0$ are arbitrary, we see that $u'=0$  in $\Lambda_{T,0}$, and thus by \eqref{asm2} also that $u= 0$ in $\Lambda_{T,0}$. 
This completes the proof of the proposition. 
\end{proof}

\vspace{5mm}
From the proposition above, we easily see that the following corollary holds: 

\begin{coro}
Let $F$ in Proposition 2 satisfy \eqref{asm1}. 
If $u$ is a $C^2$-solution of \eqref{eqF1} and if 
$u(0,x)=u_t(0,x)=0$ for $|x|>R$ with $R>0$, then 
$u(t,x)=0$ for $|x|>R+A(t)$. 
\label{cor}
\end{coro}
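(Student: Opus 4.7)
The plan is to deduce Corollary 3 as a direct pointwise consequence of Proposition 2 by localizing at each point outside the propagation cone. The strategy is: given a target point $(t^*, x^*)$ with $|x^*| > R + A(t^*)$, I would apply Proposition 2 with that very $x^*$ as the base point $x_0$ and with a suitable $T$ slightly larger than $t^*$, chosen so that the whole ball $\{|x - x^*| < A(T)\}$ misses the initial-data support $\{|x| \le R\}$.

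More concretely, first I would fix such $(t^*, x^*)$ and use the continuity and strict monotonicity of $A(t) = \int_0^t \sqrt{a(s)}\,ds$ (which follows from $a(t) > 0$) to pick $T$ with $t^* < T$ and $A(t^*) < A(T) < |x^*| - R$. Next I would verify the hypothesis \eqref{asm2} of Proposition 2 for $x_0 = x^*$: for any $x$ with $|x - x^*| < A(T)$, the reverse triangle inequality gives $|x| \ge |x^*| - |x - x^*| > |x^*| - A(T) > R$, so by assumption $u_0(x) = u_1(x) = 0$, and hence $u(0, x) = u_t(0, x) = 0$ there. Proposition 2 then yields $u \equiv 0$ on $\Lambda_{T, x^*}$.

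Finally, I would check that $(t^*, x^*)$ itself lies in $\Lambda_{T, x^*}$: the condition $|x^* - x^*| < A(T) - A(t^*)$ reduces to $A(T) > A(t^*)$, which holds by our choice of $T$. Therefore $u(t^*, x^*) = 0$, and since $(t^*, x^*)$ was arbitrary with $|x^*| > R + A(t^*)$, the corollary follows.

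The argument is essentially bookkeeping rather than hard analysis, so I do not anticipate a real obstacle; the only point requiring a moment's care is ensuring that the hypotheses of Proposition 2 are met on a slightly enlarged cone — specifically that $T$ can be chosen strictly larger than $t^*$ while still keeping $A(T) < |x^*| - R$, which is guaranteed by the strict inequality $|x^*| > R + A(t^*)$ together with continuity of $A$.
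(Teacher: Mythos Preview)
Your argument is correct and is precisely the standard localization the paper has in mind; the paper itself omits the proof entirely, stating only that the corollary follows easily from Proposition~2. The only implicit point you might make explicit is that $T$ can also be taken within the time interval on which the $C^2$-solution exists, but since $T$ may be chosen arbitrarily close to $t^*$ this is automatic.
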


\begin{proof}[Proof of Theorem 1]
By Corollary \ref{cor}, we see that $C^2$-solution $u$ of \eqref{eq} has the property of finite speed of propagation, and satisfies
\begin{equation}
\mbox{supp }u(t,\cdot)\subset \{|x|\le A(t)+R\}, 
\label{suppu0}
\end{equation}
where $A(t)$ is given by \eqref{defA}. 

We next introduce a test function $\eta\in C_0^\infty([0,\infty)$ satisfying 
\begin{align*}
&0\le \eta(t)\le 1, \qquad \eta(t)=\begin{cases}1 &(0\le t\le \fa 12),  \\
 0 & (t\ge 1), \end{cases} \\
\intertext{and let} 
&\eta^\ast(t)=\chi_{\mbox{\footnotesize supp}\eta'}\eta(t)
=\begin{cases} \eta(t) & (1/2< t< 1),  \\
   0 & \mbox{otherwise},  
  \end{cases} 
\end{align*}
where $\eta'=d\eta/dt$. 
Set 
\[
\psi_\tau(t)=\eta\lp{\fa{t}\tau}^{2p'}, \quad 
\psi_\tau^\ast(t)=\eta^\ast\lp{\fa{t}\tau}^{2p'}
\]
for $0<\tau <T_\ep$. 
Then we see that 
\begin{equation}
|\psi_\tau'|\lsm \tau^{-1}{(\psi_\tau^\ast)}^{1/p}, \quad \mbox{and }\quad 
|\psi_\tau''|\lsm \tau^{-2}{(\psi_\tau^\ast)}^{1/p}.  
\label{est-psi}
\end{equation}
Mutiplying equation in \eqref{eq} by the test funtion $\psi_\tau(t)$, and 
integrating over $\R^n$, we have 
\begin{align}
 &\fa{d^2}{dt^2}\int u\psi_\tau dx-2\fa d{dt}\int  u\psi_\tau' dx
+\int  u\psi_\tau'' dx +\mu\fa{d}{dt}\int u\psi_\tau dx  - \mu\int u\psi_\tau' dx \nonumber \\
=&\int  |u|^p\psi_\tau dx \; \mbox{ or }\; \int  |\nb_x u|^p\psi_\tau dx. 
\label{e3}
\end{align}
For the derivative nonlinearity, we have by Poincar\'e's inequality, 
\[
\int  |\nb_x u|^p\psi_\tau dx \ge \fa{\psi_\tau(t)}{(A(t)+R)^p}\int |u|^pdx
\ge C\int  |u|^p\psi_\tau dx. 
\]
Hence, it suffices to show the theorem for the nonlinearity $|u|^p$.  
Integrating the equality \eqref{e3} in $t$ over $[0,\infty)$, we have 
\begin{align*}
&\int_0^\infty \int u\psi_\tau'' dxdt-\mu\int_0^\infty \int u\psi_\tau' dxdt \\
= & \quad \ep\int\lbt{(\mu\psi_\tau(0)-\psi_\tau'(0))u_0(x)+\psi_\tau(0)u_1(x)}dx 
+\int_0^\infty \int \psi_\tau(t)|u|^p(t,x)dxdt\\
= & \quad \ep\int(\mu u_0(x)+u_1(x))dx +\int_0^\infty \int \psi_\tau(t)|u|^p(t,x)dxdt. 
\end{align*}
Let  
\begin{align*}
I_\tau&\equiv \int_0^\infty \int \psi_\tau(t)|u|^p(t,x) dxdt, \\
J&\equiv \ep\int (\mu u_0(x)+u_1(x))dx>0
\end{align*} 
by assumption. 
Then we have 
\begin{align*}
I_\tau+J &=\int_0^\infty \int u\psi_\tau'' dxdt
-\mu\int_0^\infty \int u\psi_\tau' dxdt \\
&\equiv K_1+K_2. 
\end{align*}
By H\"older's inequality and \eqref{est-psi}, we have 
\begin{align*}
|K_1|&\lsm \tau^{-2}\lp{\int_0^\infty \int |u|^p\psi_\tau^\ast dxdt}^{1/p}
\lp{\int_{\tau/2}^\tau \int_{|x|\le A(t)+R}  dxdt}^{1/{p'}} \\
 &\lsm \tau^{-2+1/{p'}}I_\tau^{1/p}, \\
\intertext{similarly,}
|K_2| &\lsm \mu \tau^{-1+1/{p'}}I_\tau^{1/p}. 
\end{align*}
Thus we have 
\begin{equation}
I_\tau+J \lsm (\tau^{-2+1/{p'}}+\mu\tau^{-1+1/{p'}})I_\tau^{1/p}. 
\label{CE}
\end{equation}
We set 
\[
E(\tau)=\tau^{-2+1/{p'}}+\mu\tau^{-1+1/{p'}}. 
\] 
By Young's inequality, we have 
\[
E(\tau)I_\tau^{1/p}\le \fa 12I_\tau+CE(\tau)^{p'}.
\]
It follows from \eqref{CE} that 
\[
J\le CE(\tau)^{p'},  
\]
hence, 
\[
\ep\le CE(\tau)^{p'}
\]
holds for $0<\tau<T_\ep$. 
Thus we have
\[
\ep\le
\begin{cases} 
 C\tau^{-1/(p-1)} &\mbox{if }\mu>0, \\
 C\tau^{-(p+1)/(p-1)} &\mbox{if }\mu=0, 
\end{cases}
\]
which imply that 
\[
\tau\le 
\begin{cases}
C\ep^{-(p-1)}&\mbox{if }\mu>0, \\
C\ep^{-(p-1)/(p+1)}&\mbox{if }\mu=0. 
\end{cases}
\]
Since $\tau$ is arbitrary in $(0,T_\ep)$, we obtain the desired estimates of the lifespan. 
This completes the proof of the theorem. 
\end{proof}

Finally, we remark that it is possible to obtain similar blow-up results to the theorem for weak solutions by using only test function methods in \cite{IkSo19,IW,W}. 
However, an upper bound for the $p$-blow-up range is required. 
We thus refrain from going into the details. 
 
\section{Conclusion}
\setcounter{equation}{0}

We consider the original equation \eqref{ore} in the de Sitter spacetime. 
We see from Theorem 1 that blow-up of solutions in finite time occurs for all $p>1$. 
This blow-up condition is exactly the same as that for $\Box_g u= -|u|^p$ or $-|\nb_x u|^p$ in the accelerated expanding FLRW spacetime with metric \eqref{FLRW} ($-1<w \le 2/n-1$) as stated in Section 1.   
Moreover, we have shown in \cite{TW3,TW4} the following upper bound of the lifespan 
for $\Box_g u= -|u|^p$ or $-|\nb_x u|^p$: 
\[
T_\ep\le C\ep^{-(p-1)/2} \qquad \mbox{if }p>1\mbox{ and }-1<w < 2/n-1. 
\]
This estimate is similar to the one in Theorem 1. 
Therefore, we conjecture that in the accelerated expanding universe the nonlinear wave equation with the term $|u|^p$ or $|\nb_x u|^p$ admits blow-up solutions for all $p>1$.

\vspace{3mm}
\noindent
{\large\bf Acknowledgements}

The authors would like to thank the referee for his/her valuable comments and suggestions on the first version of this paper. 
They would also like to thank Professor Katayama whose question has led to an improvement of the estimates of the lifespan in Theorem 1.


\end{document}